\DeclareMathOperator*{\argmin}{\arg\!\min}
\DeclareMathOperator{\prox}{prox}
\newcommand{\R}{\mathbb R}
\newcommand{\innerprod}[2]{\left\langle #1,#2\right\rangle}
\newcommand{\B}[1]{\boldsymbol #1}
\newcommand{\abs}[1]{\left\lvert #1 \right \rvert}
\newcommand{\g}{\gamma}
\newcounter{thmCounter}[subsection]
\newcommand{\dref}[1]{definition~\hyperref[def:#1]{\color{red}#1}}
\newcommand{\tref}[1]{theorem~\hyperref[thm:#1]{\color{red}#1}}
\newcommand{\Dref}[1]{definition~\hyperref[def:#1]{\color{red}#1}}
\newcommand{\Tref}[1]{theorem~\hyperref[thm:#1]{\color{red}#1}}
\newcommand{\lref}[1]{lemma~\hyperref[lem:#1]{\color{red}#1}}
\newcommand{\Lref}[1]{Lemma~\hyperref[lem:#1]{\color{red}#1}}
\newcommand{\pref}[1]{proposition~\hyperref[prop:#1]{\color{red}#1}}
\newcommand{\Pref}[1]{Proposition~\hyperref[prop:#1]{\color{red}#1}}
\newcommand{\eref}[1]{example~\hyperref[ex:#1]{\color{red}#1}}
\newcommand{\Eref}[1]{Example~\hyperref[ex:#1]{\color{red}#1}}
\newtheorem{theorem}{Theorem}[section]
\newtheorem{remark}{Remark}[section]
\newtheorem{definition}{Definition}[section]
\newcommand\blfootnote[1]{%
  \begingroup
  \renewcommand\thefootnote{}\footnote{#1}%
  \addtocounter{footnote}{-1}%
  \endgroup
}
\begin{document}

\begin{center}
    {\Large A Scalable Method for Optimal Path Planning on Manifolds via a Hopf-Lax Type Formula}\\

    \vspace{0.5cm}
    
    {\large Edward Huynh\footnote{Oden Institute, University of Texas at Austin, Austin, TX (edhuynh@utexas.edu)} and Christian Parkinson\footnote{Department of Mathematics \& Department of Computational Mathematics, Science, and Engineering, Michigan State University, East Lansing, MI (chparkin@msu.edu)}\blfootnote{The first author is funded under the National Defense Science and Engineering Graduate (NDSEG) Fellowship administered through the Department of the Air Force (AFRL/Space Force). The second author was partially supported by NSF DMS-1937229 through the Data Driven Discovery Research Training Group at the University of Arizona.}}
\end{center}

\begin{abstract}
We consider the problem of optimal path planning on a manifold which is the image of a smooth function. Optimal path-planning is of crucial importance for motion planning, image processing, and statistical data analysis. In this work, we consider a particle lying on the graph of a smooth function that seeks to navigate from some initial point to another point on the manifold in minimal time. We model the problem using optimal control theory, the dynamic programming principle, and a Hamilton-Jacobi-Bellman equation. We then design a novel primal dual hybrid gradient inspired algorithm that resolves the solution efficiently based on a generalized Hopf-Lax type formula. We present examples which demonstrate the effectiveness and efficiency of the algorithm. Finally, we demonstrate that, because the algorithm does not rely on grid-based numerical methods for partial differential equations, it scales well for high-dimensional problems.
\end{abstract}

\section{Introduction}
The increasing relevance of autonomous vehicles has led to developments in technology to maneuver them to their respective destinations. Simple models for modeling vehicle motion (e.g. Dubins \cite{Dubins1957} and Reeds-Shepps cars \cite{reeds}) have been developed and studied to determine optimal paths. This problem is an instance of path planning problems. Path planning in general has applications in motion planning \cite{Sanchez2021}, computer vision \cite{peyre2010} and grid refinement \cite{Qian2006,Ren2000}. In particular, path planning on manifolds is important in optics \cite{Hu2015} and image segmentation and vessel tracking \cite{duits2018optimal}.

Path planning using a PDE-based approach has been studied in various contexts including vehicular motion \cite{Vlad1,Vlad2,parkinson2024efficient, parkinson2021timeoptimal,Takei2013OptimalTO,takei2010}, isotropic motion \cite{ParkPolage}, human walking paths \cite{SteepTerrain2,SteepTerrain1}, models for environmental crime \cite{Arnold1, Cartee, Bohan} and robotics \cite{Borquez,ChoiBansal} to list a few. PDE-based optimal path planning arises naturally from the dynamic programming approach to optimal control theory, wherein one defined a value function which satisfies a Hamilton-Jacobi-Bellman (HJB) equation. The solution to this equation determines closed-loop feedback controls which are globally optimal. While recent work in path planning has been exploring the use of deep learning and data-driven approaches \cite{KULATHUNGA2022152, Singh2023}, the advantage of working in this paradigm is that there are no black box elements and classical analysis of PDE can provide theoretical guarantees regarding optimality and robustness.

Solving the path planning problem in this approach means being able to solve the corresponding HJB equation. Among the approaches to numerically solve Hamilton-Jacobi equations, important methods include the level-set method \cite{levelSet}, fast-sweeping \cite{kaoosher2005,luo2016convergence,parkinson2021rotating,tsai2003fast}, and fast-marching schemes \cite{Tsitsiklis1995,Sethian2000,sethian2003ordered}. These methods rely on discretizing the spatial domain and implementing finite differences to estimate derivatives. However, the computational complexity of these algorithms scales exponentially with spatial dimension, making them infeasible in high spatial dimensions. In recent years, due to applications involving high-dimensional control and differential games, variational methods which can compute the solution to a PDE at individual points have attracted substantial interest. The authors of \cite{darbon2016} present a method for solving eikonal equations using the Hopf-Lax formula. Later, similar methods were developed based on conjectural extensions of the Hopf-Lax formula to state dependent Hamiltonians \cite{CHOW2019376, lin2018splitting}.

In this paper, we propose a new formulation for computing optimal trajectories on manifolds via Hamilton-Jacobi-Bellman equations. Optimal trajectories can be resolved efficiently by discretizing time only, and formulating a saddle-point problem for optimal state and co-state trajectories similar to that presented in \cite{lin2018splitting}. We design an algorithm for solving the saddle-point problem inspired by the primal dual hybrid gradient (PDHG) algorithm of Chambolle and Pock \cite{Chambolle2011AFP}.  We specifically address path planning on manifolds which are graphs of $C^1$-functions, though with slight modifications it could likely be generalized to other manifolds. Our algorithm is a novel modification based on preconditioning of PDHG \cite{liu2021acceleration}. This approach is compatible with the time-dependent formulation for stationary/moving obstacles given in \cite{parkinson2024efficient}, and could account for a manifold which is evolving in time.

This manuscript is organized as follows: section 2 describes the mathematical formulation of the problem; section 3 introduces the variational formulation of the problem and the numerical methods which we design to solve it; section 4 presents some examples, results and discussion; section 5 includes some brief concluding remarks.   

\section{Particle moving on a manifold}

We consider the problem of minimal-time path planning on an $n$-dimensional manifold $\mathcal M$ embedded in $\R^{n+1}$ as the graph of a smooth function $M:\R^n \to \R$. We denote points on the manifold by $(x,z)$ where $x\in \R^n$ is the variable parameterizing the manifold and $z = M(x) \in \R$.  Suppose at time $t=0$, a particle is at position $(x_0,z_0)\in \mathcal{M}$ and an external user seeks to steer the particle to position $(x_f,z_f) \in \mathcal M$ in minimal time. We consider a particle exhibiting isotropic motion in $\R^n$ so that the dynamics are controlled by a unit vector $\B{a}(\cdot) \in \mathbb{S}^{n-1}$. This is pictured in Figure \ref{fig:0}. We give a brief derivation of our model here.

\begin{figure}[b!]
    \centering
    \includegraphics[width=0.6\textwidth,trim = 50 20 40 20,clip]{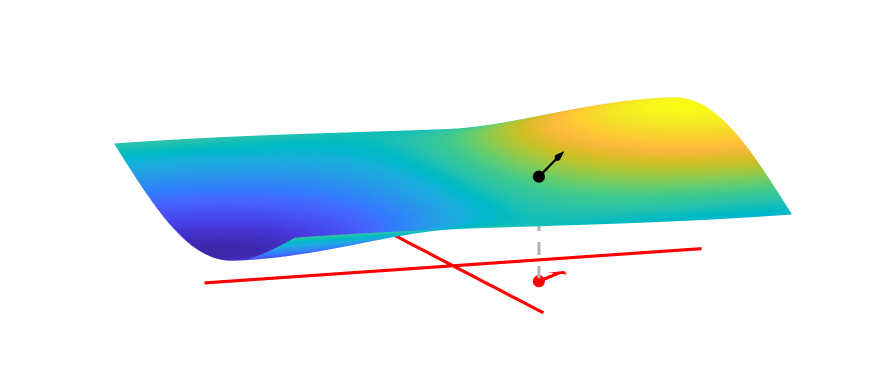}
    \caption{A particle traveling on a manifold, and its projection down to $\R^{n}$ (represented by the plane). Here $\B x(t)$ is the red dot and $(\B x(t),z(t))$ is the black dot. The red arrow is the control variable $\B a(t)$, which is a direction vector in $\R^n$. This choice of $\B a(t)$ induces a motion (black arrow) along the manifold.}
    \label{fig:0}
\end{figure}

Fix a time horizon $T > 0$, and suppose the position of the particle projected down to $\R^n$ is given by $\B x:[0,T] \to \R^n$, so that the particle's position in $\R^{n+1}$ is $(\B x(t), \B z(t)) \in \mathcal M$ where $\B z(t) := M(\B x(t)) \in \R$.  We fix an orientation on the manifold by defining the unit normal  $$\B \nu = \frac{(\nabla M,-1)}{\sqrt{1+ \abs{\nabla M}^2}}.$$ A trajectory along the manifold whose $\B x(t)$-coordinates are moving in the direction of $\B a(t)$ then has unit tangent vector $$\B \tau(t) = \frac{(\B a(t),\innerprod{\nabla M(\B x(t))}{\B a(t)})}{\sqrt{1 + \innerprod{\nabla M(\B x(t))}{\B a(t)}^2}}.$$ For brevity, we put $$q(\B x,\B a) = \frac{1}{\sqrt{1 + \innerprod{\nabla M(\B x)}{\B a}^2}}.$$ We assume that there is a speed function $v:\R^n \times [0,T] \to [0,\infty)$ such that a particle which is at $x \in \R^n$ at time $t \in [0,T]$ can move with local speed $v(x,t)$. The equation of motion in $\R^n$ is then \begin{equation} \label{eq:dynamics1}
\begin{split}
\dot{\B x}(t) = v(\B x(t),t) q(\B x(t),\B a(t)) \B a(t) \mathds{1}_{x \neq x_f}(\B x(t)).
\end{split}
\end{equation} Here the indicator function $\mathds{1}_{x\neq x_f}(x)$ is somewhat artificial. Including this in the dynamics is a manner of forcing the particle to stop when the desired final point is reached. As we will see shortly, it is also convenient because it allows us to formally derive the Hamilton-Jacobi-Bellman equation for our optimal control problem without the use of boundary conditions, which will be helpful for our numerics. We discuss this further in Section \ref{sec:NumericalApproach}. The equation of motion for the $z$-coordinate is then $$\dot{\B z}(t) = \innerprod{\nabla M(\B x(t))}{ \dot{\B x}(t)}.$$ However, one need not actually use this equation, since $\B z(t)$ can simply be defined by $\B z(t) = M(\B x(t))$. 

As stated above, the goal of the controller is to steer the particle to $(x_f,z_f)$ in minimal time. To model this mathematically, we consider a controller who seeks to minimize the cost functional \begin{equation} \label{eq:costFunc}\mathcal C[\B x(\cdot),\B a(\cdot)] = \iota_{x_f}(\B x(T)) + \int^T_0 \mathds{1}_{x\neq x_f}(\B x(t))dt\end{equation} where \begin{equation} \label{eq:convInd} \iota_{x_f}(x) = \begin{cases} 0, & x=x_f, \\ +\infty, & \text{otherwise}, \end{cases} \end{equation} is the convex indicator function of the final point $x_f$. Intuitively, using the convex indicator as the exit cost in \eqref{eq:costFunc} ensures that no path which fails to reach $x_f$ will be optimal. Using the indicator function $\mathds{1}_{x\neq x_f}(x)$ as the marginal running cost ensures that running cost is merely time spent traveling, and will stop counting when the final point is reached. Thus the optimal cost value is simply the minimal travel time.

To use the classical dynamical programming approach of Richard Bellman \cite{Bellman1952}, we fix a point $(x,t) \in \R^n \times [0,T)$, define the value function $u(x,t)$ to be the optimal \textit{remaining} cost incurred by a trajectory $\B x(\cdot)$ which is at position $x$ at time $t$. That is, $u:\R^n \times[0,T] \to \R$ is defined \begin{equation} \label{eq:valueFunc}u(x,t) = \inf_{\substack{\B a(s) \in \mathbb S^{n-1} \\ t\le s \le T}} \mathcal C_{x,t}[\B x(\cdot),\B a(\cdot)]\end{equation} where $\mathcal C_{x,t}$ is the same cost functional as before but restricted to the time interval $[t,T]$ and to trajectories which are at position $x$ at time $t$. 

Formally, applying the dynamic programming principle, one sees that the value function is the solution of the Hamilton-Jacobi-Bellman (HJB) equation \begin{equation}
    \label{eq:HJB1} \begin{split}
    &u_t + \mathds{1}_{x\neq x_f}(x) \inf_{a \in \mathbb S^{n-1}} \Big\{ v(x,t)q(x,a) \innerprod{a}{\nabla u}+1 \Big\} = 0, \,\,\,\,\,\, (x,t) \in \R^n \times [0,T)\\
    &u(x,T) = \iota_{x_f}(x), \,\,\,\,\,\,\,\,\, x \in \R^n.
    \end{split}
\end{equation} The derivation of the HJB equation---as well as a discussion of viscosity solutions for Hamilton-Jacobi equations \cite{Visc1}---is included in several references with varying levels of rigor \cite{Bertsekas,Bryson,FlemingRishel,Liberzon}. Ordinarily, this equation would be paired with a boundary condition that $u(x_f,t) = 0$ for all time $0\le t\le T$. Formally, since $u(x_f,t) = 0$ and $u_t = 0$  at $x_f$ due to the indicator function, our solution will satisfy this boundary condition. We use the indicator function in \eqref{eq:HJB1}, because our numerical methods are more amenable to problems with no boundary conditions. We employ a smooth approximation to this indicator function below. 

In much of the ensuing discussion, we use the variable $p \in \R^n$ as a proxy for the gradient $\nabla u$. We define the Hamiltonian \begin{equation}
    \label{eq:Hamiltonian1} 
    H(x,p,t) = \inf_{a \in \mathbb S^{n-1}} \Big\{ v(x,t)q(x,a) \innerprod{a}{p}+1 \Big\}
\end{equation}  
The HJB equation is most easily utilized when the Hamiltonian can be evaluated explicitly (that is, when the infimum in \eqref{eq:Hamiltonian1} can be resolved explicitly) and this is possible in our case.

\begin{theorem} \label{thm:HJBEqn}
 Equation \eqref{eq:HJB1} is equivalent to \begin{equation} \label{eq:HJB1exp}
\begin{split}
&u_t - \mathds{1}_{x\neq x_f}(x) H(x,\nabla u,t)= 0, \\
&u(x,T) = \iota_{x_f}(x),
\end{split} 
\end{equation} where the Hamiltonian is given by 
\begin{equation} \label{eq:Hamiltonian}
H(x,p,t) = v(x,t)\sqrt{p^T\left (I - \frac{\nabla M(x) \nabla M(x)^T}{1 + \abs{\nabla M(x)}^2}\right)p}- 1.
\end{equation}
Moreover, the matrix $I - \frac{\nabla M(x) \nabla M(x)^T}{1 + \abs{\nabla M(x)}^2}$ is symmetric and positive definite (uniformly in $x$ as long as $\nabla M(x)$ is bounded), and as such $H(x,p,t)$ is convex in $p$.
\end{theorem}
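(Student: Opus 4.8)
The plan is to resolve the infimum in \eqref{eq:Hamiltonian1} explicitly and then verify the stated algebraic and analytic properties of the resulting matrix. First I would fix $x$ and $t$ and write $q(x,a)\innerprod{a}{p} = \frac{\innerprod{a}{p}}{\sqrt{1+\innerprod{\nabla M(x)}{a}^2}}$, so that minimizing over $a\in\mathbb S^{n-1}$ the quantity $v(x,t)q(x,a)\innerprod{a}{p}+1$ amounts to minimizing $g(a) := \frac{\innerprod{a}{p}}{\sqrt{1+\innerprod{\nabla M(x)}{a}^2}}$ over the unit sphere (the $+1$ and the factor $v\ge 0$ pull out). Since $g$ is continuous on the compact sphere the infimum is attained; because $g(-a) = -g(a)$, the minimum value is $-\max_{a}|g(a)| = -\max_a g(a)$ (after possibly replacing $a$ by $-a$ we may restrict to $\innerprod{a}{p}\ge 0$). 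So the task is to compute $\displaystyle\max_{\abs{a}=1}\frac{\innerprod{a}{p}}{\sqrt{1+\innerprod{\nabla M(x)}{a}^2}}$.

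The cleanest route is a change of variables that linearizes the tangent direction. Writing $b = (a,\innerprod{\nabla M(x)}{a})\in\R^{n+1}$, one has $|b|^2 = 1 + \innerprod{\nabla M(x)}{a}^2$ and $\innerprod{a}{p} = \innerprod{b}{(p,0)}_{\R^{n+1}}$, so $g(a) = \frac{\innerprod{b}{(p,0)}}{|b|} = \innerprod{\hat b}{(p,0)}$ where $\hat b = b/|b|$ is the unit tangent vector $\B\tau$ from the derivation. Thus $\max_a g(a)$ equals the largest inner product of $(p,0)$ with a unit vector in the tangent plane $T_{(x,M(x))}\mathcal M$, which is exactly $\abs{\Pi (p,0)}$ where $\Pi$ is orthogonal projection in $\R^{n+1}$ onto that tangent plane. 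The tangent plane is the orthogonal complement of the normal $\B\nu = \frac{(\nabla M,-1)}{\sqrt{1+\abs{\nabla M}^2}}$, so $\Pi = I_{n+1} - \B\nu\B\nu^T$ and $\abs{\Pi(p,0)}^2 = \abs{(p,0)}^2 - \innerprod{(p,0)}{\B\nu}^2 = \abs{p}^2 - \frac{\innerprod{p}{\nabla M}^2}{1+\abs{\nabla M}^2} = p^T\!\left(I - \frac{\nabla M\nabla M^T}{1+\abs{\nabla M}^2}\right)p$. Plugging back in gives $H(x,p,t) = v(x,t)\sqrt{p^T(I - \frac{\nabla M\nabla M^T}{1+\abs{\nabla M}^2})p} - 1$, which is \eqref{eq:Hamiltonian}, and \eqref{eq:HJB1exp} follows by substituting into \eqref{eq:HJB1}.

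It remains to check the matrix properties. Symmetry of $A(x) := I - \frac{\nabla M(x)\nabla M(x)^T}{1+\abs{\nabla M(x)}^2}$ is immediate since $\nabla M\nabla M^T$ is a symmetric rank-one matrix. For positive definiteness, for any $p$ I would compute $p^T A p = \abs{p}^2 - \frac{\innerprod{p}{\nabla M}^2}{1+\abs{\nabla M}^2}$ and apply Cauchy--Schwarz: $\innerprod{p}{\nabla M}^2 \le \abs{p}^2\abs{\nabla M}^2$, hence $p^T A p \ge \abs{p}^2\left(1 - \frac{\abs{\nabla M}^2}{1+\abs{\nabla M}^2}\right) = \frac{\abs{p}^2}{1+\abs{\nabla M(x)}^2} > 0$ for $p\neq 0$; if $\abs{\nabla M(x)}\le C$ for all $x$ then $p^TA p \ge \frac{1}{1+C^2}\abs{p}^2$, giving the uniform lower bound. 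Convexity of $H$ in $p$ then follows because $p\mapsto \sqrt{p^TAp}$ is a seminorm (nonnegative, positively homogeneous, and subadditive by the triangle inequality for the inner product induced by the positive definite $A$), hence convex; multiplying by $v(x,t)\ge 0$ and subtracting the constant $1$ preserves convexity.

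I expect the only real obstacle to be making the reduction to the tangent-plane projection fully rigorous — in particular justifying that, after the substitution $b=(a,\innerprod{\nabla M}{a})$, every unit vector in the tangent plane is realized by some $a\in\mathbb S^{n-1}$ (equivalently, that the maximizer of $g$ is genuinely attained at a tangent direction and not lost in the rescaling), and handling the degenerate case $p=0$ separately (where $H = -1$ trivially). Everything else is routine linear algebra and Cauchy--Schwarz.
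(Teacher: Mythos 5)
Your argument is correct, and it is a genuinely different (and in my view cleaner) route than the paper's. The paper resolves $\inf_{|a|=1} f(a)$ by Lagrange multipliers: it derives the stationarity condition, shows the minimizer lies in $\operatorname{span}\{p,\nabla M\}$, solves for the coefficients $k_1,k_2$ with a fair amount of algebra, and substitutes back (relegating all of this to Appendix A, and explicitly skipping the case $p\parallel\nabla M$). You instead observe that $a\mapsto b=(a,\innerprod{\nabla M}{a})$ is, up to normalization, a bijection from $\mathbb S^{n-1}$ onto the unit sphere of the tangent plane $T_{(x,M(x))}\mathcal M=\nu^\perp$, and that $g(a)=\innerprod{\hat b}{(p,0)}$; the extremal value is then just the norm of the orthogonal projection $\Pi(p,0)=(I-\nu\nu^T)(p,0)$, which evaluates immediately to $\sqrt{p^TAp}$. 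This buys you: no Lagrange multipliers, no case split on linear (in)dependence of $p$ and $\nabla M$, and a transparent geometric reason why $A=I-\frac{\nabla M\nabla M^T}{1+|\nabla M|^2}$ appears (it is the pullback of the projection onto the tangent plane). The one gap you flag — surjectivity of $a\mapsto\hat b$ onto the unit tangent sphere — is easy to close: any unit $\hat\tau\in\nu^\perp$ has the form $(w,\innerprod{\nabla M}{w})$ with $w\neq 0$, and taking $a=w/|w|$ recovers $\hat\tau$ after normalization. For the matrix properties, you use Cauchy–Schwarz to get $p^TAp\ge |p|^2/(1+|\nabla M|^2)$, whereas the paper computes the eigenvalues of $A$ explicitly ($1$ with multiplicity $n-1$ and $1/(1+|\nabla M|^2)$ with eigenvector $\nabla M$); both give the same sharp lower bound, the paper's giving slightly more structural information that it reuses later. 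Your convexity argument matches the paper's.
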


\begin{proof} For the sake of the minimization in \eqref{eq:Hamiltonian1}, we can fix $x,p$ (and thus $\nabla M$), at which point the derivation of the Hamiltonian boils down to minimizing $$f(a) = \frac{\innerprod p a}{\sqrt{1+\innerprod{\nabla M}a^2}} \,\,\,\,\, \text{ such that } \,\,\,\,\, \abs a^2 = 1.$$ This is a relatively straightforward, if long and tedious, task so we relegate it to Appendix \ref{nHJBDeriv} and accept here that \eqref{eq:Hamiltonian} is correct.

 Given this, for the remainder of the manuscript, we define \begin{equation} \label{eq:Amat}A(x) = I - \frac{\nabla M(x) \nabla M(x)^T}{1 + \abs{\nabla M(x)}^2}.\end{equation} From this formula, it is clear that $A(x)$ is symmetric. Because it is a rank 1 perturbation of the identity, we can explicitly see that the eigenvalues are given by $\lambda_1 = 1$ which has as eigenvectors the set of all vectors orthogonal to $\nabla M(x)$ and thus has multiplicity $n-1$ in dimension $n$, and $\lambda_2 = \frac{1}{1+\abs{\nabla M(x)}^2}$ with eigenvector $\nabla M(x)$. Thus, $A(x)$ is positive definite (uniformly in $x$ when $\nabla M(x)$ remains bounded) and $\sqrt{p^T A(x) p}$ defines a norm on vectors $p$. It follows that the Hamiltonian \eqref{eq:Hamiltonian} is convex in $p$.
\end{proof}

This equation admits a unique viscosity solution that is related to the distance function on a Riemannian manifold \cite{mantegazza2002hamilton}. We note that in the case that the surface is flat so that $\nabla M \equiv 0$, we have \begin{equation}
\label{eq:EikonalHamiltonian}
H(x,p,t) = v(x) \abs{p} - 1
\end{equation} so our equation reduces to the standard Eikonal equation. With our definition for the matrix $A(x)$ in \eqref{eq:Amat}, we will henceforth write our Hamiltonian \eqref{eq:Hamiltonian} as \begin{equation} \label{eq:HamVec} H(x,p,t) =v(x,t) \sqrt{p^TA(x)p}- 1 \end{equation}

\section{Numerical Approach}\label{sec:NumericalApproach}
In this section, we will discuss the numerical methods we use to efficiently solve this problem. To do this, we first mention in Section \ref{sec:HopfLax} the representation formula that is used to obtain a numerical solution. Section \ref{sec:PDHG} will explain the splitting algorithm that is used to approximately solve the requisite saddle-point problem. and Section \ref{sec:ApproxSpace} will provide a theoretical justification for the choice of our initialization.

\subsection{A Generalized Hopf-Lax Formula} \label{sec:HopfLax} We consider the value function for a generic optimal control problem given by
\begin{gather}\label{OptimalControl}
    u(x,t) = \inf_{\B x(\cdot), \B a(\cdot)} \left\{ g(\B x(T)) + \int_t^T r(\B x,\B a,s)ds\,  : \, \B x(t) = x, \dot{\B x} = f(\B x, \B a, s) \text{ on } (t,T] \right\}.
\end{gather} Here $g$ and $r$ represent the terminal cost and running cost, respectively. Under mild conditions on the data $u(x,t)$ is the viscosity solution of the Hamilton-Jacobi-Bellman equation $u_t + H(x,\nabla u,t) = 0$ subject to the terminal condition $u(x,T) = g(x)$, where $H$ is given by $H(x, p,t) = \inf_{\alpha} \{\langle p, f(x,a,t) \rangle + r(x,a,t)\}$ \cite{EvansControl,FlemingRishel}. As in \cite{lin2018splitting,parkinson2024efficient}, we make the time-reversing substitution $t \mapsto T-t$, to arrive at a forward-in-time HJB equation \begin{equation} \label{eq:generalHJ}\begin{split} 
    u_t + \hat H(x,\nabla u,t) &= 0,\\
    u(x,0) &= g(x).
    \end{split}
\end{equation} Here $\hat H(x,p,t) = -H(x,-p,T-t)$ and the solution of \eqref{eq:generalHJ} is the time-reversed version of the value function defined in \eqref{OptimalControl}. In an abuse of notation, we drop the hat and also do not rename $u$. 

In the case where the Hamiltonian is time- and state-independent, the classical Hopf-Lax formula provides a representation of the value function at individual points $(x,t)$ in terms of a minimization problem \cite[Chap. 3]{EvansPDE}. The authors of \cite{CHOW2019376} present a conjectural generalized Hopf-Lax formula for time- and state-dependent Hamiltonians, which essentially boils down to optimizing \eqref{OptimalControl} over all bi-characteristic state curves $\B x(s)$ and costate curves $\B p(s) = \nabla u(x(s),s)$. While the specific conjecture of \cite{CHOW2019376} applies to $C^2$ Hamiltonians which are convex in $p$ and convex, coercive initial data, they specifically address the case of so-called \emph{level set equations} \cite{levelSet}, wherein the Hamiltonian is positively homogeneous of degree 1 in $p$, and prove that for small time this formula can represent the viscosity solution. Likewise, the authors of \cite{lin2018splitting} propose a discrete approximation of \eqref{OptimalControl}, which again, is merely conjectured to approximate the viscosity solution of \eqref{eq:generalHJ}, though they provide solid empirical evidence, which is corroborated by \cite{parkinson2024efficient}, that their discrete formulation can indeed represent the viscosity solution of eikonal-type equations. 

For the sake of completeness, we derive a discrete saddle-point formulation of \eqref{OptimalControl} very similar to those presented in \cite{lin2018splitting,parkinson2024efficient}. Fixing $J \in \mathbb N$, we use a uniform discretization of  $(t,T]$ with step size $\Delta t = (T-t)/J$ and let $x_j$ denote the discrete approximation of $x(t_j)$, and $a_j$ denote the control action at $t_j$ for $j = 0,1,\ldots, J$ where $t_j = t+j\Delta t$. We then discretize the dynamics with backward Euler: $\dot{x} \approx \frac{x(t) - x(t-\Delta t)}{\Delta t}$. Letting $f_j = f(x_j, a_j,t_j), r_j = r(x_j,a_j,t_j)$ and introducing Lagrange multipliers $p_j$ on the constraints, we obtain
\begin{align*}
    u(x,t) &= \inf_{\B x, \B a}\left\{g(\B x(T)) + \int_t^{T} r(\B x,\B a, s) ds:\ \B x(t) = x,\ \dot{\B x} = f(\B x,\B a,s),\ s\in (t,T]\right\} \\
    &\approx \inf_{x_j,a_j}\sup_{p_j}\left\{g(x_J) + \sum_{j=1}^{J} \Delta t r_j +  \sum^J_{j=1}\langle p_j, \Delta t f_j +x_{j-1} - x_j \rangle\right\}\\
    &= \inf_{x_j}\inf_{a_j}\sup_{p_j}\left\{g(x_J) + \sum_{j=1}^J \langle p_j, x_{j-1} - x_{j}\rangle  + \Delta t \sum_{j=1}^{J}\Big( \langle p_j, f_j\rangle + r_j\Big)\right\}.
\end{align*}
Operating formally, we then interchange the infimum in $a_j$ with the supremum in $p_j$ to obtain
\begin{align*}
    u(x,t) &\approx \inf_{x_j}\sup_{p_j}\left\{g(x_J) + \sum_{j=1}^J \langle p_j, x_{j-1} - x_{j}\rangle  + \Delta t \sum_{j=1}^{J}\inf_{a_j} \Big( \langle p_j, f_j\rangle+r_j\Big)\right\}\\
    &= \inf_{x_j}\sup_{p_j}\left\{g(x_J) + \sum_{j=1}^J \langle p_j, x_{j-1} - x_{j}\rangle  + \Delta t \sum_{j=1}^{J} H(x_j,a_j,t_j)\right\}
\end{align*}
Finally, we apply the backwards in time substitution $t \mapsto T-t$ (and let $x_j$ and $p_j$ now represent time-reversed trajectories) to arrive at 
\begin{equation} \label{eq:generalizedHopfLax}
    u(x,t) \approx \inf_{x_j}\sup_{p_j}\left\{g(x_0) + \sum_{j=1}^J \langle p_j, x_{j} - x_{j-1}\rangle  - \Delta t \sum_{j=1}^{J} H(t_j, x_j, p_j)\right\},
\end{equation}
which is the discretized version of \eqref{OptimalControl}. We point out the formal similarity with the classical Hopf-Lax formula \cite[Chap. 3]{EvansPDE} in the case that $H(x,p,t) = H(p)$:
$$u(x,t) = \inf_{x_0 \in \R^n} \left\{ g(x_0) + t H^*\left(\frac{x-x_0}{t}\right)\right\} =  \inf_{x_0 \in \R^n} \sup_{p \in \R^n} \{g(x_0) + \langle p, x-x_0 \rangle -  t H(p)\}.$$ Above $H^*$ is the convex conjugate of $H$ (which we expand in the second equality). This formula likewise provides the solution of the Hamilton-Jacobi equation at a single point in the form of a saddle-point problem. The key difference is that when the Hamiltonian is time- and space-independent, the co-state characteristics are straight lines, which is why the minimization and maximization take place over single values (in essence the starting point of the state trajectory and ending point of the co-state trajectory), as opposed to taking place over the entire discretized trajectory as in \eqref{eq:generalizedHopfLax}.


Swapping the order of optimization to arrive at \eqref{eq:generalizedHopfLax} is licit under certain convexity conditions (e.g., the hypotheses of von Neumann's Minimax Theorem \cite{v1928theorie}). In general, we don't necessarily expect our data to satisfy these conditions. but conjecture, as in \cite{CHOW2019376,lin2018splitting,parkinson2024efficient,ParkPolage} that \eqref{eq:generalizedHopfLax} can still represent the viscosity solution of \eqref{eq:HJB1exp}. 

Stacking all state vectors into a single vector $\tilde x = (x_0^T,x_1^T,\ldots, x_N^T)^T$ and likewise for costate vectors, \eqref{eq:generalizedHopfLax} can be written like
\begin{equation} \label{eq:SPP}
    u (x,t) \approx \min_{\tilde x \in \R^{(J+1)n}} \max_{\tilde p \in \R^{(J+1)n}} \bigg\{\tilde{G}(\tilde{x}) + \langle \tilde{p}, D\tilde{x} \rangle + \tilde{H}(\tilde{x}, \tilde{p}, \tilde{t}) \bigg\},
\end{equation}
where $n$ is the dimension of the state space, $D$ is the matrix which accomplishes the backwards difference $x_j - x_{j-1}$ in \eqref{eq:generalizedHopfLax}, $\tilde{H}(\tilde x,\tilde p,\tilde t)$ is the discrete approximation to the integral along the path given in \eqref{eq:generalizedHopfLax}, and $G(\tilde x) = g(x_0)$. We write it in this form to demonstrate that this expresses $u(x,t)$ as the solution to a saddle-point problem very similar to the type of general saddle-point problem considered by Chambolle and Pock \cite{Chambolle2011AFP} in their seminal paper where they introduce the Primal Dual Hybrid Gradient (PDHG) algorithm. Inspired by this, we design a PDHG type algorithm to approximate \eqref{eq:SPP}. 

\subsection{A PDHG Algorithm for \eqref{eq:SPP}} \label{sec:PDHG}
The original PDHG algorithm \cite{Chambolle2011AFP} allows for efficient solution to certain saddle-point problems. The algorithm was specifically designed for problems of the form \eqref{eq:SPP} where $\tilde H(\tilde x, \tilde p,\tilde t)$ is state-independent, but has been successfully applied by \cite{lin2018splitting,parkinson2024efficient,ParkPolage} to problems with state-dependent Hamiltonian. For our particular case, we make one final simplification before designing our PDHG-type algorithm. 

For the HJB equation \eqref{eq:HJB1exp}, \eqref{eq:generalizedHopfLax} can be written \begin{equation} \label{eq:p} \begin{split}u(x,t) &\approx \inf_{\{x_j\}}\sup_{\{p_j\}} \bigg\{g(x_0) + \sum^J_{j=1} \innerprod{p_j}{x_{j}-x_{j-1}} \\ &\hspace{2.5cm}- \Delta t \sum^J_{j=1} \mathds 1_{x\neq x_f}(x_j)(v(x_j,t_j) \innerprod{p_j}{A(x_j)p_j} -1) \bigg\} \end{split}\end{equation} For reasons that will be elucidated shortly, it will be convenient to make a change of variables in the costate variable. Specifically, because $A(x)$, as defined in \eqref{eq:Amat}, is symmetric and positive definite (uniformly in $x$ so long as $\nabla M(x)$ is bounded), we have a Cholesky factorization $A(x) = L(x)L(x)^T$ where $L(x)$ is lower triangular with positive diagonal elements. Defining $w_j = L(x_j)^Tp_j$, we see \begin{equation}\label{eq:w}\begin{split} u(x,t) &\approx \inf_{\{x_j\}}\sup_{\{w_j\}} \bigg\{g(x_0) + \sum^J_{j=1} \innerprod{w_j}{L(x_j)^{-1}(x_{j}-x_{j-1})} \\ &\hspace{2.5cm}- \Delta t \sum^J_{j=1} \mathds 1_{x\neq x_f}(x_j)(v(x_j,t_j) \abs{w_j} -1) \bigg\}. \end{split}\end{equation} 

Roughly speaking, the PDHG algorithm operates by alternating between maximization in $p$ (or $w$) and minimization in $x$. A convenient facet of our saddle point problems is that if $\{p_j\}$ is fixed, the minimization with respect to $\{x_j\}$ in \eqref{eq:p} can be entirely de-coupled and solved to the individual vectors $x_j$. Likewise, if $\{x_j\}$ is fixed, the maximization over $\{w_j\}$ in \eqref{eq:w} can be performed separately for each $w_j$. Thus, we numerically solve our saddle point problem using Algorithm \ref{Algorithm1}. 

Because it has bearing here and will be useful later, we recall the definition of the proximal operator.  \begin{definition}
    Suppose $f:\R^n \to \R$ is a proper, convex, and lower semi-continuous function. Then for each $y \in \R^n$, we define the proximal operator of $f$ by
    $$\prox_{f}(y) = \argmin_{x\in \R^n} \bigg\{f(x) + \frac{1}{2}|x - y|^2 \bigg\}$$
\end{definition} Using this notation, the Algorithm \ref{Algorithm1} essentially boils down to iterating \begin{equation} \label{eq:proxs}
    \begin{split}
        w^{k+1}_j &= \prox_{\sigma \Delta t H(x^k_j,(L(x_j^k)^T)^{-1}(\,\cdot\,),t_j)}(\beta_j^k), \\
        p^{k+1}_j &= (L(x_j^k)^T)^{-1}w^{k+1}_j, \\
        x^{k+1}_j &= \prox_{-\tau\Delta t H(\,\cdot\,,p_j^{k+1},t_j)}(\nu_j^k),
    \end{split}
\end{equation} where $\beta^k_j, \nu_j^k$ are as defined in Algorithm \ref{Algorithm1}. One of the key issues of implementing this algorithm directly is that these proximal operators do not in general admit solutions which can be explicitly resolved as simple formulas of the data. The reason that we make the change of variables is that the minimization which defines $w_j^{k+1}$ can now be explicitly resolved in the case of our particular Hamiltonian. We see  \begin{equation} \label{eq:resolvew} \begin{split} 
w^{k+1}_j &= \max\left(0,1 - \frac{\sigma \Delta t \mathds 1_{x\neq x_f}(x^k_j) v(x^k_j,t_j)}{\abs{\beta^k_j}}\right)\beta^k_j \\ 
&\text{where } \beta^k_j = w_j^k +\sigma L(x_j^k)^{-1}(z_j^k - z_{j-1}^k).
\end{split} \end{equation} This formula comes from the proximal operator of the Euclidean norm; derivations can be found in \cite{beck,parkinson2024efficient}.

\begin{algorithm}[t!]
\caption{Splitting Method for \eqref{eq:generalizedHopfLax}}
\hspace*{\algorithmicindent} Given a point $(x,t) \in \mathbb{R}^d\times (0,T)$, a Hamiltonian $H$, an initial data function $g$, a time-discretization count $J$, a max iteration count $K$, an error tolerance $TOL$, and PDHG parameters $\sigma,\ \tau >0$ and $\kappa \in [0,1]$, we approximately solve \eqref{eq:generalizedHopfLax} as follows:\\

Set $x_J^1 = x$, $w_0^1 = 0$, and $\Delta t = t/N$, $t_j = j\Delta t$. Initialize $\{x^0_j\}_{j=0}^{J-1}, \{p_j^0\}^J_{j=1}$ randomly, and set $\{z_j^0\} = \{x_j^0\}, \{w_j^0\} = \{p_j^0\}$
\begin{algorithmic}
\FOR{$k = 0:K$}
\STATE $w_0^{k+1} = 0, \,\, p_0^{k+1}=0$ 
\FOR{$j = 1:J$}
\STATE $\beta^k_j = w_j^k +\sigma L(x_j^k)^{-1}(z_j^k - z_{j-1}^k)$
\STATE $w_j^{k+1} = \argmin_{\tilde{w}} \{\sigma \Delta t  H(x_j^k, (L(x_j^k)^T)^{-1}\tilde{w},t_j) + \frac12|\tilde{w} - \beta^k_j|^2\}$ 
\STATE $p_j^{k+1} = (L(x_j^{k})^T)^{-1}w_j^{k+1}$
\ENDFOR
\STATE $x_0^{k+1} = \argmin_{\tilde{x}}\{ \tau g(\tilde{x}) + \frac12|\tilde{x} - (x_0^k + \tau p_1^{k+1})|^2$\quad (note: $p_0^{k+1} = 0$)
\FOR{$j = 1:J-1$}
\STATE $\nu_j^k = x_j^k - \tau(p_j^{k+1} - p_{j+1}^{k+1})$
\STATE $x_j^{k+1} = \argmin_{\tilde{x}}\{-\tau\Delta t H(\tilde{x}, p_j^{k+1},t_j) + \frac12|\tilde{x} - \nu^k_j|^2\}$
\ENDFOR
\STATE $x_J^{k+1} = x$
\FOR{$j=0:J$}
\STATE $z_j^{k+1} = x_j^{k+1} + \kappa(x_j^{k+1} - x_j^k)$
\ENDFOR
\STATE $\text{change} = \max\{|x^{k+1} - x^k|, |p^{k+1} - p^k|\}$
\IF{$\text{change} < \text{TOL}$}
\STATE stop iteration
\ENDIF
\ENDFOR
\STATE $u = g(x_0) + \sum_{j=1}^N\langle p, x_j - x_{j-1}\rangle - \Delta t H(x_j, p_j,t_j)$
\RETURN $u$; the approximate value of the solution at the point $(x,t)$
\RETURN $\{x_j^{k+1}\}$; an approximation of the optimal trajectory
\RETURN $\{p_j^{k+1}\}$; an approximation of the optimal costate trajectory (from which optimal control values can be computed)
\end{algorithmic}\label{Algorithm1}
\end{algorithm}

While this explains why we make the change of variables, the choice of the Cholesky factorization perhaps deserves some justification. The calculations are similarly simplified using any factorization of the form $A(x) = L(x)L(x)^T$, and it may seem more natural at first blush to instead take $L(x)$ to be the positive square root of $A(x)$. However, for the sake of computation, this is actually inadvisable. We note that $A(x) \to I$ as $\nabla M(x) \to 0$. Because we need to invert the factor matrix $L(x)$ in Algorithm \ref{Algorithm1}, we would like $L(x)$ to respect this limit as well. That is, we want a factorization $A(x) = L(x)L(x)^T$ such that $L(x) \to I$ as $\nabla M(x) \to 0$. This is true of Cholesky factorization, as can be seen in the 2-dimensional case, where ignoring the prefactor $1/(1+\abs{\nabla M}^2)$, we have \begin{equation} \label{eq:chol}\begin{bmatrix}1 + M_y^2 & -M_xM_y \\ -M_xM_y & 1+M_x^2 \end{bmatrix} = \begin{bmatrix}
        \sqrt{1+M_y^2} &0\\
        \frac{-M_xM_y}{\sqrt{1+M_y^2}} & \sqrt{\frac{1+\abs{\nabla M}^2}{1+M_y^2}}
    \end{bmatrix} \begin{bmatrix}
        \sqrt{1+M_y^2} & \frac{-M_xM_y}{\sqrt{1+M_y^2}} \\
        0& \sqrt{\frac{1+\abs{\nabla M}^2}{1+M_y^2}}
    \end{bmatrix}.\end{equation} By contrast, any factorization based on the eigenvectors of $A(x)$ (one of which is $\nabla M(x)$) will degenerate as $\nabla M(x) \to 0$. This is seen, for example, in the 2-dimensional case if we use the positive square root: $$
    \begin{bmatrix}1 + M_y^2 & -M_xM_y \\ -M_xM_y & 1+M_x^2 \end{bmatrix} = \left(\frac{1}{\abs{\nabla M}^2}\begin{bmatrix}
        M_x^2 + \alpha M_y^2 & (\alpha-1)M_xM_y\\
        (\alpha-1)M_xM_y & \alpha M_x^2+M_y^2
    \end{bmatrix}\right)^2$$ where $\alpha = \sqrt{1+\abs{\nabla M}^2}.$ In this case, the factor matrix is not well-defined in the limit as $\nabla M(x) \to 0$, since the matrix of eigenvectors is normalized by the factor $1/\abs{\nabla M(x)}$.

To reiterate, using the factorization $A(x) = L(x)L(x)^T$ ensures that the minimization for $w^{k+1}_j$ (and thus the resolution of $p^{k+1}_j$) in Algorithm \ref{Algorithm1} is entirely explicit, in that we can write down the formula for the minimizer. Except in extremely simple cases (such as $v(x,t) = 1$ and $\nabla M(x) = 0$), we will not be able to resolve the minimization for $x_j^{k+1}$ explicitly. Accordingly, we need some approximation of the minimizer.

\subsection{Approximating the minimizer for $x^{k+1}_j$} \label{sec:ApproxSpace}

The problem of resolving the proximal operator which defines $x^{k+1}_j$ in \eqref{eq:proxs} is also encountered in \cite{lin2018splitting, parkinson2024efficient, ParkPolage}. Because it cannot be resolved explicitly, the authors suggest using a few steps of gradient descent. In our case, letting ${\mathcal H}_M(x)$ denote the Hessian of $M(x)$, and suppressing dependence on $x$, we see \begin{equation} \label{eq:gradA}\nabla_x\left(\sqrt{p^TAp}\right) = \frac{(p^T\nabla M)^2{\mathcal H}_M\nabla M - (1+\abs{\nabla M}^2)(p^T\nabla M){\mathcal H}_Mp}{\sqrt{p^TAp}\left(1+\abs{\nabla M}^2\right)^2}. \end{equation} Thus, looking at \eqref{eq:HJB1exp}, we see \begin{equation} \label{eq:gradDesc}\begin{split} 
\nabla_x\Big(\mathds 1_{x\neq x_f}(x)H(x,p,t)\Big) &= \nabla_x\left(\mathds 1_{x\neq x_f}(x)\left(v(x,t)\sqrt{p^TA(x)p}-1\right)\right)\\
&= \left(v(x,t)\sqrt{p^TA(x)p}-1\right) \nabla_x\left(\mathds 1_{x\neq x_f}(x)\right) \\ &\hspace{0.3in}+ \mathds 1_{x\neq x_f}(x)\sqrt{p^TA(x)p} \,\, \nabla_xv(x,t) \\ &\hspace{0.3in}+  \mathds 1_{x\neq x_f}(x) v(x,t) \nabla_x \left( \sqrt{p^TA(x)p}\right).
\end{split} \end{equation} We can then initialize $x^{k+1}_j$  however we like (we use $x^k_j$), and run a few iterations of \begin{equation} \label{eq:actualGradDesc}
x^{k+1}_j \leftarrow x^{k+1}_j - \eta \left(-\tau \Delta t \nabla_x\Big(\mathds 1_{x\neq x_f}(x^{k+1}_j)H(x^{k+1}_j,p^{k+1}_j,t_j)\Big) + (x^{k+1}_j - \nu^k_j)\right)
\end{equation} where $\eta >0$ is the gradient descent rate. A similar but much simpler formula could be used to approximate $x_0^{k+1}$ in Algorithm \ref{Algorithm1}, or depending on $g(x)$, there may be an explicit formula (in our case, since $g(x) = \iota_{x_f}(x)$ is the convex indicator of the $x_f$, we trivially have $x_0^{k+1} = x_f$). One last note regarding the indicator function $\mathds 1_{x\neq x_f}$: in order to use \eqref{eq:gradDesc}, one needs to approximate this indicator function with a smooth function such as \begin{equation} \label{eq:indApprox} \mathds 1 _{x\neq x_f}(x) \approx 1- e^{-B\abs{x-x_f}^2}\end{equation} for a large constant $B$. Alternatively, one could deal with this by separately checking if $x = x_f$ is the minimizer when updating $x^{k+1}_j$, in which case the indicator function can be replaced by $1$ in \eqref{eq:gradDesc}. We found that convergence of the algorithm was faster (both in terms of clock time and iteration count) when a smooth approximation is used. This approximation is also used in \eqref{eq:resolvew}.

Empirically, as na\"ive as this may seem, it works surprisingly well: both \cite{lin2018splitting,ParkPolage} report that a single step of gradient descent at each iteration is sufficient for their purposes. However, in our case, the computation of the gradient in \eqref{eq:gradDesc} is among the most computationally costly operations during each iteration in Algorithm \ref{Algorithm1}, so to increase efficiency (in terms of clock time), one may employ the even simpler approximation \begin{equation} \label{eq:proxApprox} x^{k+1}_j = \prox_{-\tau\Delta t H(\,\cdot\,,p_j^{k+1},t_j)}(\nu_j^k) \approx \nu^k_j.\end{equation} If $p^{k+1}_j$ has already converged so that it is held constant with respect to the iteration count $k$, then this amounts to resolving $x^{k+1}_j$ via fixed point iteration on the proximal operator which is known to converge via the Kranosel'ski\u\i-Mann theorem \cite{Dong,Krano,Mann}. So there is reason to believe that after many iterations, this will be a good approximation. That being said, we would still like to quantify the error. In order to do so, we lay out some assumptions for $v(x,t)$ and $M(x)$. 

\begin{itemize}
    \item[(A1)] $M(x)$ is $C^2$-smooth with bounded, Lipschitz continuous Hessian $\mathcal H_M$;

    \item[(A2)] $v(x,t)$ is $C^1$-smooth, bounded from above and below by positive constants, has bounded, Lipschitz continuous gradient.
\end{itemize}

We can then bound the error in the approximation \eqref{eq:proxApprox} as follows. 

\begin{theorem}
Fix $p,\nu \in \R^n$, $t,\Delta t>0$. Assume that $M(x)$ and $v(x,t)$ satisfy (A1) and (A2) respectively, and take $H(x,p,t)$ as defined in \eqref{eq:HamVec}. Then for $\tau >0$ small enough, there exists $x \in \R^n$ such that $$x = \prox_{-\tau \Delta t H(\,\cdot,p,t) }(\nu)$$ is uniquely determined and satisfies \begin{equation} \label{eq:proxIneq} \abs{x- \nu} \le \tau \Delta t\abs{p}\Big(\abs{\nabla v(x,t)} + 2\abs{v(x,t)}\|{\mathcal H}_M(x)\| \Big).\end{equation}
\end{theorem}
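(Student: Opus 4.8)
The plan is to treat the defining minimization for the proximal operator as a fixed-point problem and then extract the quantitative bound from the first-order optimality condition. Write $\Phi(x) = -\tau\Delta t\, H(x,p,t) + \tfrac12\abs{x-\nu}^2$. Since $H(\cdot,p,t) = v(\cdot,t)\sqrt{p^TA(\cdot)p} - 1$, assumptions (A1)--(A2) guarantee that $x\mapsto H(x,p,t)$ has a gradient which is bounded and Lipschitz on $\R^n$ (the Lipschitz constant depending on $\abs p$, the bounds on $v$, $\nabla v$, $\mathcal H_M$ and its Lipschitz constant, as one reads off from \eqref{eq:gradA}--\eqref{eq:gradDesc}). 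Hence for $\tau$ small enough the map $x\mapsto x - \tau\Delta t\,\nabla_x H(x,p,t)$ is a contraction-perturbation of the identity, so $\Phi$ is strongly convex (its Hessian is $I$ minus a term of size $O(\tau\Delta t)$), and therefore the minimizer $x$ exists and is unique. This gives the first assertion of the theorem.

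Next I would write the optimality condition $\nabla\Phi(x) = 0$, i.e.
\[
x - \nu = \tau\Delta t\,\nabla_x H(x,p,t),
\]
and simply estimate the right-hand side. Using the product-rule expansion \eqref{eq:gradDesc} with the indicator replaced by $1$ (the statement concerns the smooth Hamiltonian \eqref{eq:HamVec}, so only the last two lines of \eqref{eq:gradDesc} survive), we get
\[
\abs{x-\nu} \le \tau\Delta t\Big(\abs{\nabla v(x,t)}\,\sqrt{p^TA(x)p} + \abs{v(x,t)}\,\big\lvert\nabla_x\sqrt{p^TA(x)p}\,\big\rvert\Big).
\]
The two remaining tasks are to bound $\sqrt{p^TA(x)p}$ and $\bigl\lvert\nabla_x\sqrt{p^TA(x)p}\,\bigr\rvert$ in terms of $\abs p$ and $\|\mathcal H_M(x)\|$. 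The first is immediate from Theorem \ref{thm:HJBEqn}: the eigenvalues of $A(x)$ lie in $(0,1]$, so $\sqrt{p^TA(x)p}\le\abs p$. For the second I would start from the explicit formula \eqref{eq:gradA}, bound $\abs{p^T\nabla M}\le\abs p\,\abs{\nabla M}$ in each numerator term, bound $\sqrt{p^TAp}$ below — here one uses that $A(x)$ is positive definite, specifically $\sqrt{p^TAp}\ge \abs p/\sqrt{1+\abs{\nabla M}^2}$ from the smallest eigenvalue $\lambda_2 = 1/(1+\abs{\nabla M}^2)$ — and collect the powers of $(1+\abs{\nabla M}^2)$; the $\abs{\nabla M}$-dependent factors should cancel cleanly, leaving $\bigl\lvert\nabla_x\sqrt{p^TAp}\,\bigr\rvert\le 2\abs p\,\|\mathcal H_M(x)\|$ (the constant $2$ being exactly what appears in \eqref{eq:proxIneq}). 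Substituting the two bounds yields \eqref{eq:proxIneq}.

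The main obstacle is the algebraic bookkeeping in the third step: showing that the raw bound coming from \eqref{eq:gradA} collapses to the clean constant $2$ requires carefully tracking how the two numerator terms combine against the $(1+\abs{\nabla M}^2)^2$ denominator and the lower bound on $\sqrt{p^TAp}$, and verifying that the worst case over $\nabla M$ is indeed $2$ rather than something slightly larger. A secondary, more routine point is making the smallness threshold on $\tau$ explicit — it should read something like $\tau\Delta t \cdot (\text{Lipschitz constant of }\nabla_x H(\cdot,p,t)) < 1$ — and noting that this Lipschitz constant is finite precisely because of (A1)--(A2) (bounded $v$, bounded and Lipschitz $\mathcal H_M$, so that the map $x\mapsto\nabla_x H(x,p,t)$ inherits Lipschitz continuity on all of $\R^n$). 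Once strong convexity of $\Phi$ is in hand, existence and uniqueness are standard, and the inequality is just the triangle inequality applied to the optimality condition.
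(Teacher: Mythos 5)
Your proposal follows essentially the same path as the paper: establish existence and uniqueness via strong convexity of $\Phi$ for small $\tau$ (the paper phrases this as semiconvexity of $-v\sqrt{p^TAp}$ with a linear modulus, citing \cite{cannarsa2004semiconcave}, which is equivalent to your Lipschitz-gradient argument given (A1)--(A2)), then read off the first-order optimality condition $x-\nu = \tau\Delta t\,\nabla_x H(x,p,t)$ and estimate the right-hand side using the eigenvalue bounds $\abs{p}^2/(1+\abs{\nabla M}^2)\le p^TAp\le \abs{p}^2$, Cauchy--Schwarz, and $\abs{\nabla M}\le(1+\abs{\nabla M}^2)^{1/2}$. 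The algebraic ``obstacle'' you flag does resolve cleanly: after bounding numerator and denominator in \eqref{eq:gradA} one is left with the factor $\abs{\nabla M}(1+2\abs{\nabla M}^2)/(1+\abs{\nabla M}^2)^{3/2}\le 2$, which is exactly where the constant $2$ in \eqref{eq:proxIneq} comes from.
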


\begin{remark}
In essence, \eqref{eq:proxIneq} states that as long as $p^{k+1}_j$ remains bounded, assumptions (A1) and (A2) ensure that, all else remaining equal, the approximation of $x^{k+1}_j$ given by \eqref{eq:proxApprox} can be made arbitrarily accurate by decreasing $\tau$ and/or $\Delta t$. In particular, by decreasing $\tau$, one may ensure that the error in this approximation is on the order of $\Delta t$, the same as the error induced by the backward Euler time discretization used in the derivation of \eqref{eq:generalizedHopfLax}. We note that under (A1) and (A2), \eqref{eq:proxIneq} can be written $\abs{x-\nu}\le C\tau\Delta t \abs{p}$ if desired. Also, it is likely that the assumptions could be relaxed slightly (for example, to allow for $v,\nabla v,\nabla M$ and $\mathcal H_M$ to be merely bounded on compact sets); for brevity, we only include this strong set of sufficient conditions.
\end{remark}

\begin{proof} 
If $p= 0$, the result is trivial since $x = \nu$ is the minimizer. We assume $p \neq 0$. Writing out the proximal operator for our Hamiltonian and ignoring constants, we see \begin{equation} \label{eq:proxx}x = \argmin_{x^* \in \R^n} \left\{-\tau\Delta t v(x^*,t) \sqrt{p^TA(x^*)p} + \frac 1 2 \abs{x^*-\nu}^2\right\}.\end{equation} Given (A1) and (A2), the function $-v(x,t)\sqrt{p^TA(x)p}$ is semiconvex with a linear modulus \cite[Proposition 2.1.2]{cannarsa2004semiconcave}, meaning that for $\tau$ small enough the function being minimized in \eqref{eq:proxx} is strongly convex and there is a unique minimizer. The minimum in \eqref{eq:proxx} occurs at a point where the gradient of the function being minimized is zero. Using \eqref{eq:gradA} and once again suppressing the argument of $A$ and $M$, this shows that \begin{align*} x-\nu &= \tau \Delta t \sqrt{p^TAp} \nabla v(x,t) \\ &\hspace{0.3cm} + \tau \Delta t v(x,t) \frac{(p^T\nabla M)^2{\mathcal H}_M\nabla M - (1+\abs{\nabla M}^2)(p^T\nabla M){\mathcal H}_Mp}{\sqrt{p^TAp}\left(1+\abs{\nabla M}^2\right)^2}\end{align*} We recall that the matrix $A(x)$ defined in \eqref{eq:Amat} is SPD with eigenvalues $1/(1+\abs{\nabla M(x)}^2)$ and $1$. Thus $$\frac{\abs{p}^2}{1+\abs{\nabla M}^2} \le p^TAp \le \abs{p}^2.$$ Using this inequality, along with the Cauchy-Schwarz inequality, and the obvious inequality $\abs{\nabla M} \le (1+\abs{\nabla M}^2)^{1/2}$, we see \begin{align*}
    \abs{x-\nu} &\le \tau\Delta t \abs p \abs{\nabla v(x,t)}  \\ &\hspace{0.7cm} +\tau \Delta t \abs{v(x,t)}\frac{\abs{p}^2 \abs{\nabla M}^3 \|\mathcal H_M\| + \abs{p}^2(1+\abs{\nabla M}^2)\abs{\nabla M}\|\mathcal H_M\|}{\frac{\abs{p}}{(1+\abs{\nabla M}^2)^{1/2}}(1+\abs{\nabla M}^2)^2 }\\
    &\le \tau\Delta t \abs{p} \Big(\abs{\nabla v(x,t)} + 2\abs{v(x,t)}\|\mathcal H_M(x)\|\Big)
\end{align*} as desired. 
\end{proof} We note that this result neglects to include the indicator function $\mathds 1_{x\neq x_f}(x)$. However, if this is approximated by \eqref{eq:indApprox}, the result still holds since the approximating function $1-e^{-B\abs{x-x_f}^2}$ is smooth with exponentially decaying derivatives, though these derivatives will appear in the estimate \eqref{eq:proxIneq}. 

Chambolle and Pock \cite{Chambolle2011AFP} prove convergence of the algorithm under conditions that can be translated to our scenario as follows: $H(x,p,t)$ is state-independent, $\kappa=1$, and $\sigma \tau \|L(x)^{-1}\|^2\|D\|^2 < 1$ where $L(x)L(x)^T=A(x)$ and $D$ is a $(J+1)n \times (J+1)n$ block matrix with $I_{n\times n}$ the along the diagonal, and $-I_{n\times n}$ along the first subdiagonal (this comes from \eqref{eq:SPP}). In \cite{lin2018splitting,parkinson2024efficient,ParkPolage}, there is good empirical evidence that convergence can still be accomplished under similar conditions for state-dependent Hamiltonians, though a proof is elusive. Using these conditions, as a starting point, we will always set $\kappa = 1$. We note that $\|D\|\le 2$ and $\|L(x)^{-1}\|^2 = 1+\abs{\nabla M(x)}^2$, the maximum eigenvalue of $A(x)^{-1}$. 
Thus at the very least one should require that \begin{equation} \label{eq:sigtau}\sigma \tau < \frac{1}{4\max\Big(1+\abs{\nabla M(x)}^2\Big)}. \end{equation} In light of \eqref{eq:proxIneq}, in practice we fix $\sigma =1$ and decrease $\tau$ to satisfy \eqref{eq:sigtau} while also ensuring that the approximation of $x^{k+1}_j$ is suitably accurate. 

We include a few final implementation notes. When using the approximation \eqref{eq:proxApprox}, we found that Algorithm \ref{Algorithm1} converges much faster, both in terms of iteration count and clock time, but the resulting path is often somewhat jagged, though it is essence the correct shape. Accordingly, after initializing the algorithm randomly, we use the approximation \eqref{eq:proxApprox} to resolve $x^{k+1}_j$  for the first 2000 iterations so as to resolve the basic skeleton of the path. After this, we switch to resolving $x^{k+1}_j$ via the gradient descent approximation \eqref{eq:actualGradDesc} for the remaining iterations. When we do this, we approximate the indicator function $\mathds 1_{x\neq x_f}(x)$ via \eqref{eq:indApprox}. Recall, the cost functional we are minimizing is \eqref{eq:costFunc}. This will assign infinite cost to any path which does not reach the desired endpoint $x_f$, so the traveler must reach the endpoint. The indicator function then incentivizes the traveler to reach the endpoint \emph{as quickly as possible}, as opposed to lollygagging for a bit and then heading toward the endpoint. The constant $B$ in \eqref{eq:indApprox} could then be seen as a measure of this incentivization: smaller $B$ means the traveler has less incentive to reach $x_f$ quickly, whereas $B\to \infty$ would recover the perfect incentivization to reach $x_f$ as efficiently as possible. Accordingly, we would like to take $B$ very large. However, when $B$ is very large, the gradient descent approximation \eqref{eq:actualGradDesc} requires smaller gradient descent step $\eta$, and this diminishes the effects of $v(x,t)$ and $M(x)$. So as a compromise, we begin the gradient descent with a fairly large descent rate ($\eta = 0.025$ in the below examples) and a fairly small value $B$ ($B = 50$), and every 1000 iterations thereafter, we halve the gradient descent rate and increase $B$ by 50. For the random initialization, we always initialize $\{x^0_j\}_{j=1}^J$ to be a straight-line path between $x$ and $x_f$ and then add noise distributed like $\mathcal N(0,0.1)$ to each component; for $\{p^0_j\}_{j=1}^J$, each component is initialized randomly with distribution $\mathcal N(0,0.1)$.

One last note is that the algorithm is embarrassingly parallelizable with respect to computation of individual paths (or equivalently individual value $u(x,t)$ of the value function), so if one does desire to resolve the value function in the entirety of a spatial domain, a parallel implementation is possible. 

\section{Results \& Discussion}

We include a few example to demonstrate the effectiveness and efficiency of our algorithm. All simulations were run in MATLAB on the second author's desktop computer with an Intel(R) Core(TM) i7-14700 processor running at 2.1GHz with 32GB of RAM. The code used to generate all the ensuing figures is uploaded to GitHub\footnote{\url{https://github.com/chparkinson/path_plan_manifold_Hopf_Lax/}}. In all simulations, we set the maximum iteration count to 40000. None of the paths displayed failed to resolve to within a tolerance of $10^{-3}$ within this iteration count, and most required significantly fewer. In the two dimensional example, we hard-code the Cholesky factorization of the matrix $A(x)$ given in \eqref{eq:chol}, while in the high dimensional example we use MATLAB's built in Cholesky factorization routine. In all examples, we choose $\Delta t = 0.1$. The value of $t$ changes from example to example, but is somewhat arbitrary as long as it is large enough that there is time to reach the end point.

\begin{figure}[b!]
    \centering
    \begin{tabular}{l l l}
        $\abs{x}$ & $u$ & Err. \\ 
\hline
1.9118  &  1.9344  & 2.2599e-02 \\ 
1.7417  &  1.7420  & 2.4852e-04 \\ 
1.7157  &  1.7528  & 3.7081e-02 \\ 
1.8538  &  1.8602  & 6.3243e-03 \\ 
1.7823  &  1.7723  & 1.0003e-02\\ 
\hline
    \end{tabular} \,\,\,\,\,\,
    \begin{tabular}{l l l}
    $\abs{x}$ & $u$ & Err. \\ 
\hline
    2.5188  &  2.5392  & 2.0418e-02 \\ 
1.2081  &  1.2005  & 7.5970e-03 \\ 
1.2721  &  1.2732  & 1.1581e-03 \\ 
1.9907  &  1.9877  & 3.0218e-03 \\ 
1.7064  &  1.7140  & 7.6603e-03 \\ 
\hline
    \end{tabular}
    \caption{The approximate solution $u$ provided by Algorithm \ref{Algorithm1} in the case that $M \equiv 0, v \equiv 1$, and $x_f = (0,0,\ldots, 0)$. The state space is $\R^{10}$ and the $x$ values are 10 randomly generated members of $[-1,1]^{10}.$ In this case, the exact solution is $u(x,t) = \abs{x}.$ Note that our approximation error is never larger than $3.8 \times 10^{-2}$, even though $\Delta t = 0.1$.}
    \label{fig:tab}
\end{figure}

In a preliminary example to demonstrate the accuracy of our approximation, we take $M(x) \equiv 0$, $v\equiv 1$ and $x_f = (0,0,\ldots,0).$ In this case, all optimal paths are straight lines and for any $t \ge \abs{x}$, we have $u(x,t) = \abs{x}$ since this is the time required to travel from $x$ to the origin. We solve this problem numerically with $n = 10$ spatial dimensions, for ten randomly chosen values of $x$ and report the error in our approximation in Figure \ref{fig:tab}. Note that the largest error is roughly $10^{-2}$ even though $\Delta t = 0.1$.

The first visual example involves path planning on the manifold $(x,y,M(x,y))$ where $M(x,y) = a\sin(\pi x)\cos(\pi y)$ for some constant $a$. In this example, the velocity is constant $v \equiv 1$, so we are simply finding geodesics on the manifold. Here we fix the end point $x_f = (1,1)$, randomly generate 20 starting points in $[-1,1] \times [-1, 1]$, and compute the optimal path from each starting point. The results of this simulation are displayed in the top panel of Figure \ref{fig:1}. For these simulations, Algorithm \ref{Algorithm1} converged in an average of 1.04 seconds CPU time and 6290 iterations. In the bottom panels of Figure \ref{fig:1}, we plot the optimal path along the manifold from $x = (-1,-1)$ to $x_f = (1,1)$. For the plot in the bottom left panel, we set $a = 1$, so the ``hills" are rather gentle, whereas in the bottom right panel, we set $a = 3$ so that the hills are much steeper. As the particle travels along the manifold, the distance traveled is locally $(1+\abs{\nabla M(x,y)}^2)^2$ units per unit distance in the $xy$-plane, so we expect that as the terrain becomes steeper the particle is more strongly incentivized to travel along the flat areas, and we see this. With the gentler hills, the particle cuts corners more liberally, whereas with the steeper hills, the path more closely sticks to the straight lines between the hills and valleys. 

\begin{figure}
\centering
\fbox{\includegraphics[width=0.31\textwidth,trim = 50 10 35 10,clip]{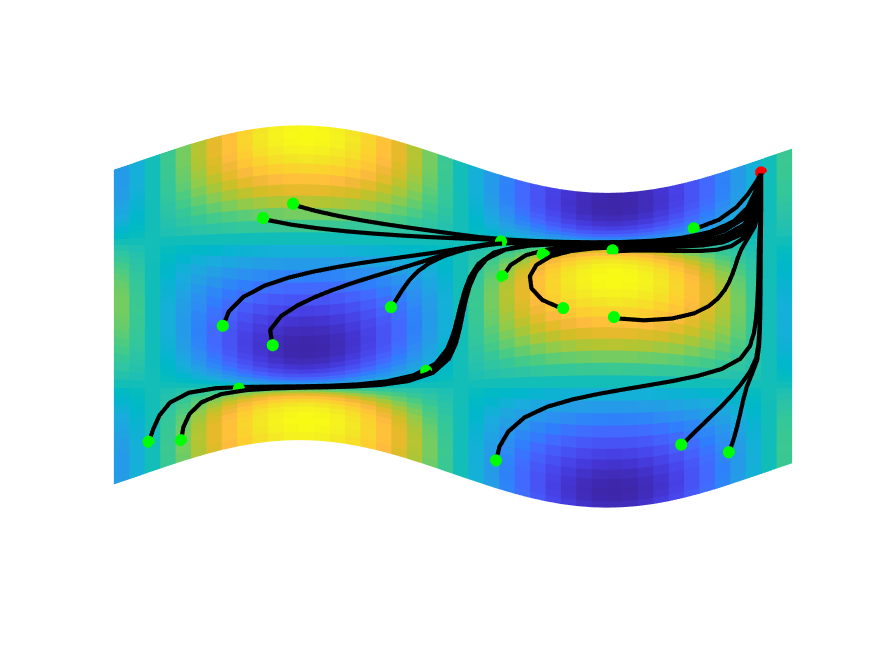}} \\ 
\fbox{\includegraphics[width=0.31\textwidth,trim = 50 10 35 10,clip]{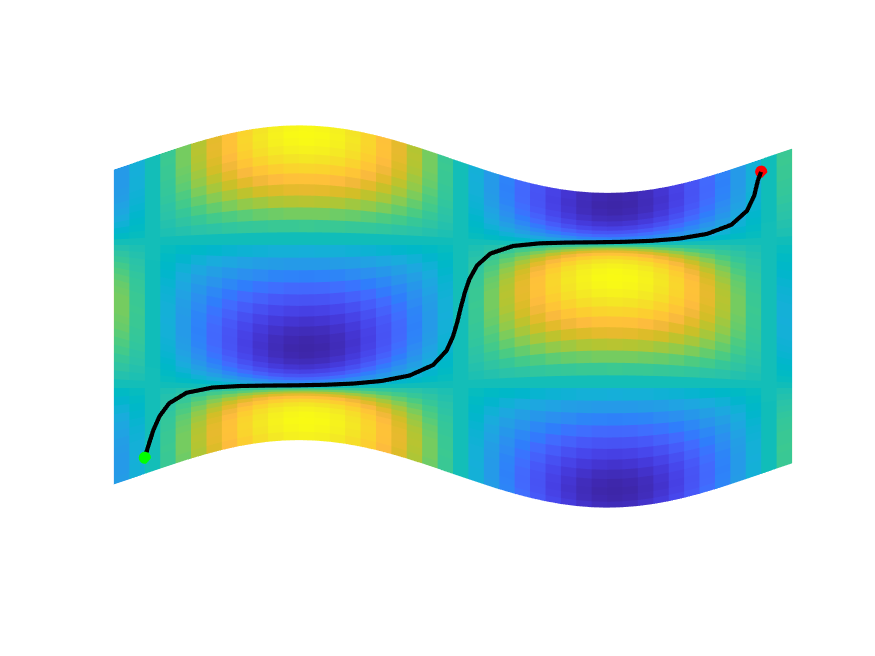}} 
\fbox{\includegraphics[width=0.31\textwidth,trim = 50 10 35 10,clip]{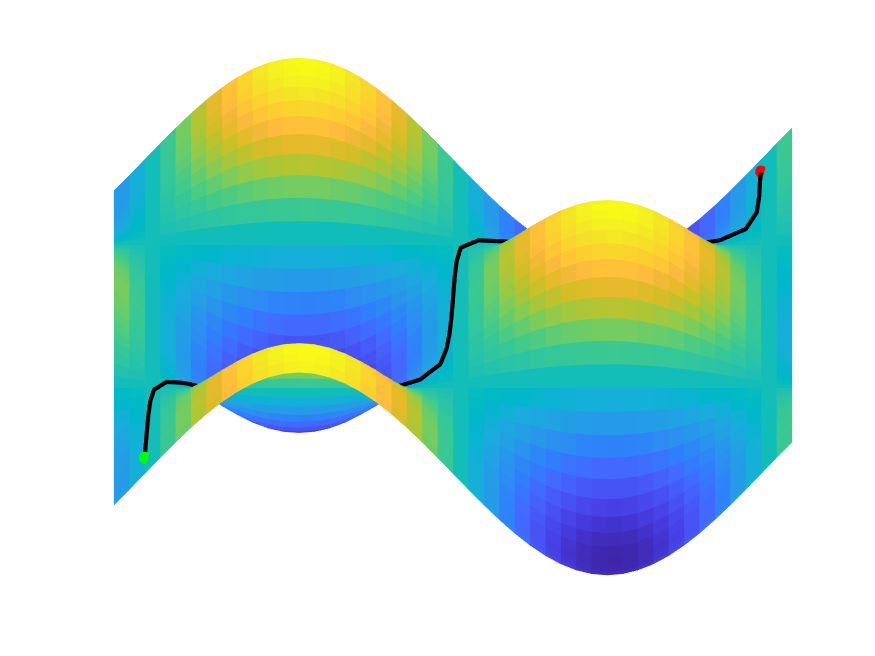}}
\caption{Optimal paths on the graph of $M(x,y) = a\sin(\pi x)\cos(\pi y)$ for constant $a$. Top panel: $a=1$, optimal paths are plotted from 20 randomly chosen points in $[-1,1]\times [-1,1]$ to the point $(1,1)$. Bottom left panel: $a =1$, plotted is the optimal path from $(-1,-1)$ to $(1,1)$. Bottom right panel: $a = 3$, plotted is the optimal path from $(-1,-1)$ to $(1,1)$. Notice that when $a$ is larger, the path is more apt to stay in the flat regions, avoiding the hills and valleys, since travel distance along the manifold is $(1+\abs{\nabla M(x,y)}^2)^{1/2}$ per unit distance in the $xy$-plane.}
\label{fig:1}
\end{figure}

In the second example, our manifold is the graph of $M(x,y) = 2\text{exp}(-(x^2+y^2))$. The results are in Figure \ref{fig:2}. Again, we set $x_f = (1,1)$, and in this case we take randomly chosen initial points in $[-0.85,-0.65]\times [-0.85,-0.65].$  Here the manifold is like a steep hill between the initial points and the final point. Optimal paths should walk around the hill. Because the manifold and the choice of initial points is symmetric about the line $y = x$, when the velocity is constant $v \equiv 1$, we expect that about half of the paths should choose to walk around the hill in either direction (depending on which side of the line $y=x$ the initial point lies on). This is exactly what we find in the left panel of Figure \ref{fig:2}. If instead the velocity is $v(x,y) = 1+(x-1)^2$ as in the right panel of Figure \ref{fig:2}, then the velocity is much slower on the right side of the image, so regardless of the initial position, if it is near enough to the line $y = x$, the optimal action is to ``stay left" as long as possible, which is why all of the paths in that image travel up in $y$ before traveling right in $x$. In this example, Algorithm \ref{Algorithm1} resolved the paths in the left panel in an average of 1.17 seconds of CPU time and 9439 iterations, and paths in the right panel in an average of 0.92 seconds of CPU time and 8442 iterations.  

\begin{figure}[t!]
\centering
\fbox{\includegraphics[width=0.4\textwidth,trim = 50 30 35 60,clip]{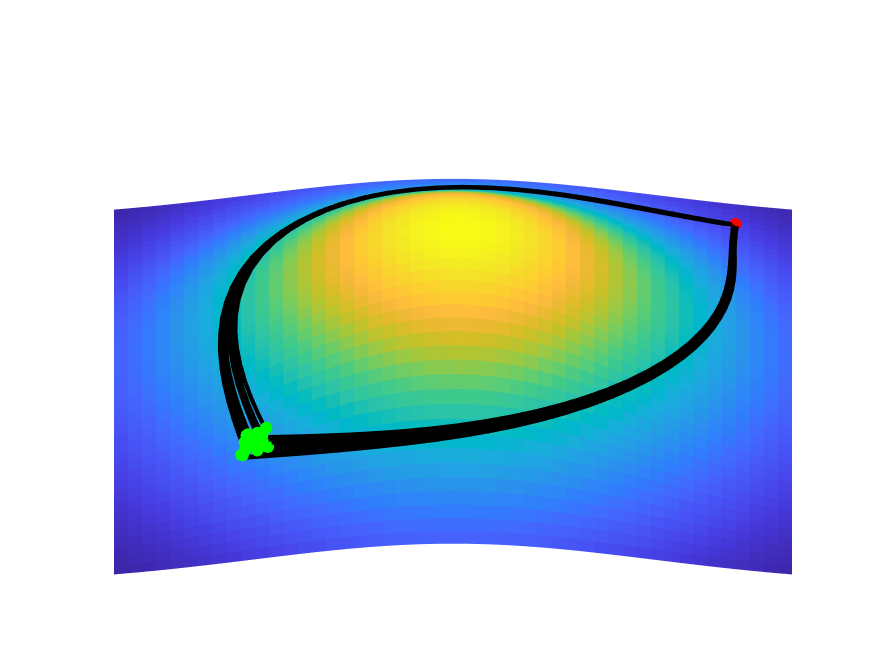}}\,\,
\fbox{\includegraphics[width=0.4\textwidth,trim = 50 30 35 60,clip]{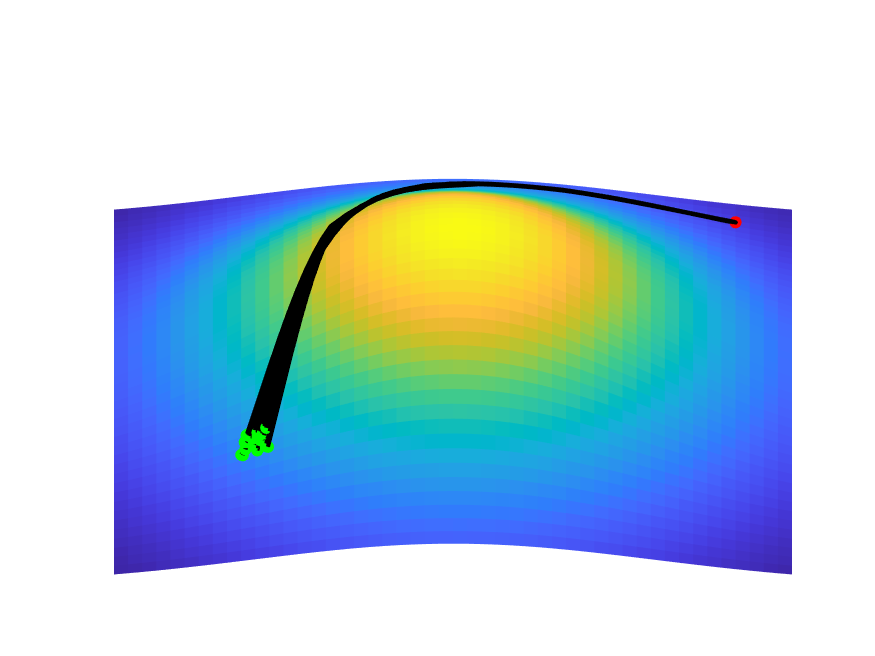}}
\caption{Optimal paths on the manifold $M(x,y) = 2\text{exp}(-(x^2+y^2))$. The final point is $x_f = (1,1)$ in each image, and there are 20 initial points are randomly selected in $[-0.85,-0.65]\times [-0.85,-0.65]$ (the random selection for each image is done separately). In the left panel, the velocity is $v \equiv 1$, so that, as one may expect, roughly half of the paths travel either direction around the large hill in the middle. In the right panel, the velocity is $v(x,y) = 1+(x-1)^2$, meaning much faster travel is allowed in the left hand side of the image. Because of this, each particle travels up in $y$ before traveling $right$ in $x$.}
\label{fig:2}
\end{figure}

In our third example, we demonstrate that Algorithm \ref{Algorithm1} can generate optimal trajectories efficiently even in high dimensional problems. Accordingly, we again use the manifold $M(x) = 2\text{exp}(-\abs{x}^2)$, and $v \equiv 1$, but now test the algorithm in dimensions $10-30.$ In this case, $x_f = (1,1,1,\ldots,1)$ and $x = (-0.9,-1,-1,\ldots,-1).$ We choose $-0.9$ for the first coordinate of $x$ to ensure that there is a unique optimal path, instead of two equally optimal paths traversing opposite sides of the Gaussian ``mountain." For each dimension $n = 10,11,\ldots, 30$, we run 10 trials with different random initializations and record the average CPU time as a function of the dimension. The results---as well as a plot of each component $x_j(t)$ of the optimal path in 25 dimensions---are included in Figure \ref{fig:3}. In the 25 dimensional example, if the path was a straight line, all components would be plotted as straight lines from $x_j(0)$ at time $0$ to $1$ at time $t \approx 11.30$ (which is the optimal travel time resolved by the algorithm). For the optimal path, we note that the $x_1(t)$ which started at $-0.9$ deviates far from the ``straight line" path. This is to allow the other components (for whom the problem is entirely symmetric) to take a path in 24 dimensional space which more closely resembles a straight line. This behavior should be expected: in 25 dimensions, it is costly to move all entries away from the center to avoid the Gaussian in the middle. It is much easier to let $x_1(t)$ take a roundabout path, while letting the others travel in a straight line. In the plot of CPU time as a function of dimension, we note that even in 30 dimensions, Algorithm \ref{Algorithm1} is able to resolve optimal paths in in average of 2.87 seconds of CPU time with a standard deviation of roughly 0.34 seconds over the 10 trials. Problems in such high dimensions are entirely intractable for classical PDE-based path planning methods which rely on grid-based approximations of solutions of PDE, because the complexity of the grid-based solvers scale exponentially with dimension. While it is hard to precisely pin down what the dimensional scaling is here, the only discretization is in time, meaning that, on the face of it, assuming the minimization problems can be resolved exactly and the Cholesky factorization is not the most expensive operation, individual iterations in Algorithm \ref{Algorithm1} scale linearly with dimension (simply due to the increased FLOPs), and linearly with increases in $t$ and decreases in $\Delta t$. Some complicating factors are that larger travel time $t$ is required in higher dimensions because there is more distance to cover, iterations required for convergence may depend on dimension, and in higher dimensions, the Cholesky factorization (which has a complexity of $O(n^3)$ in dimension $n$) will become very expensive. So while Figure \ref{fig:3} may not definitively demonstrate linear scaling with dimension (though it does seem to be approximately linear for dimensions 10-30), it at least seems to indicate the scaling is much better than exponential. In any case, up to at least dimension 30 the algorithm is efficient enough to compute optimal paths in near real time.

\begin{figure}[t!]
    \centering
    \includegraphics[width=0.45\textwidth]{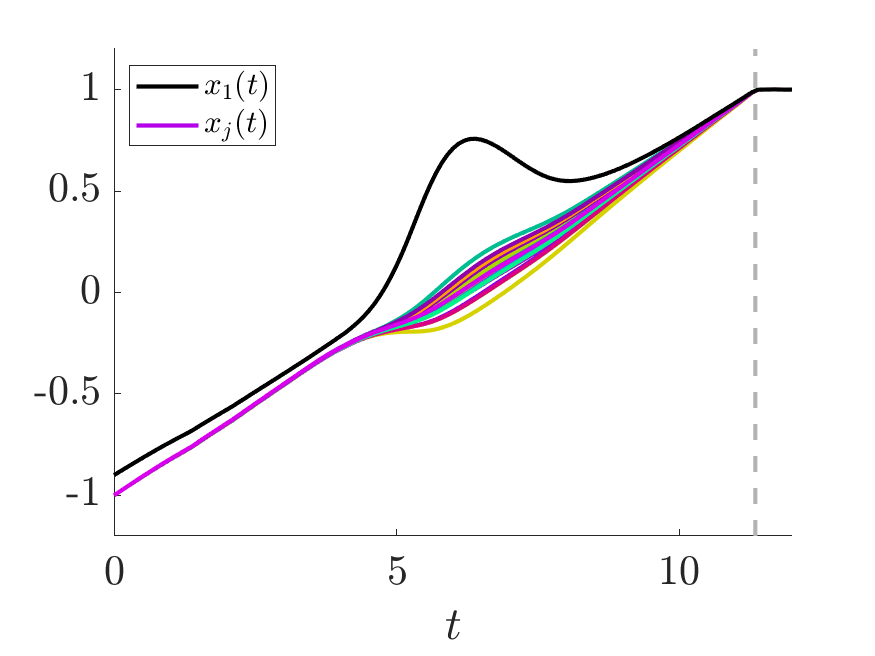} \,\,
    \includegraphics[width=0.45\textwidth]{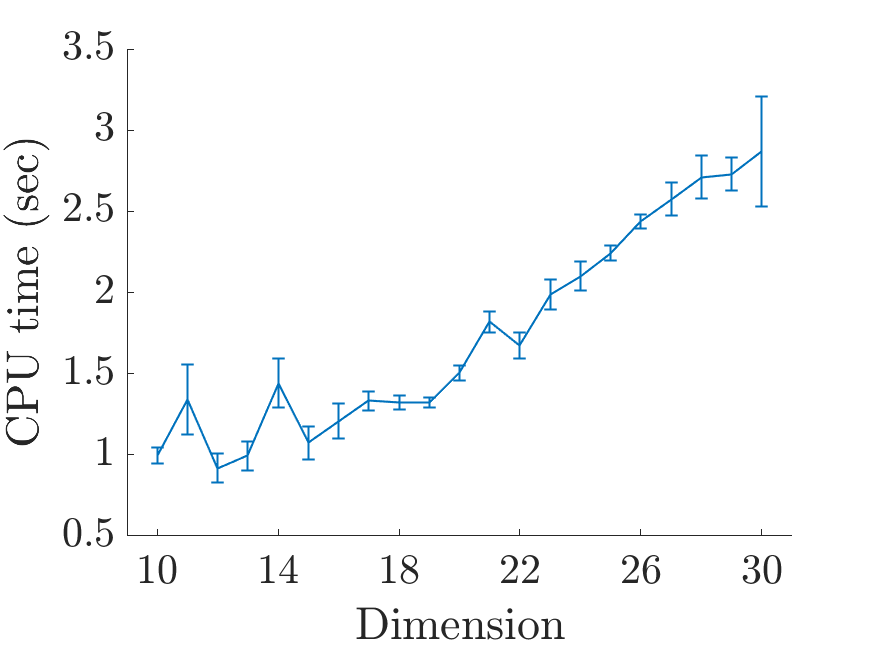} \,\,
    \caption{Left panel: an optimal path from $x = (-0.9,-1,-1,\ldots,-1)$ to $x_f = (1,1,1,\ldots,1)$ on the graph of $M(x) = \text{exp}(-\abs{x}^2)$ in 25 dimensions. Plotted are each coordinate of the solution. The gray dotted line is the value $u(x,t) \approx 11.30$ resolved by the algorithm, which is an approximation of the optimal travel time. We see that all coordinates do indeed reach $1$ at roughly that time and then stay there. Right panel: the average CPU time required to resolve the optimal path as a function of the dimension. Averages are taken over ten trials and the bars represent the standard deviation in the results. These results seem to indicate that the dimensional scaling for the algorithm is at very least subexponential, and looks approximately linear for the dimensions 10-30.}
    \label{fig:3}
\end{figure}

\section{Conclusion} In this manuscript, we present an algorithm for PDE-based optimal path planning on manifolds. The algorithm relies on a dynamic programming and Hamilton-Jacobi-Bellman approach to optimal control, and exploits a discrete Hopf-Lax type formula that allows one to approximately solve the Hamilton-Jacobi-Bellman equation at individual points without needing to discretize the spatial domain and resort to finite difference approximations. We propose a PDHG inspired algorithm to solve the requisite saddle-point problem and resolve optimal trajectories in both state and co-state space. We demonstrate the efficiency of our method on a few example problems, including problems in dimension up to 30. 

There are several avenues for future work to explore. To actually prove that the conjectured Hopf-Lax formula can represent the value function for a large class of HJB equations would be a large theoretical leap forward. On the practical side, adaptation of this and similar algorithms for real time implementation (and the more general development of PDE-based path planning methods like this) could provide a scalable, interpretable, and efficient method for real time path planning in any number of scenarios. In fact, with slight tweaks similar methods are likely possible for manifolds of higher codimension or manifolds which are not the graphs of smooth functions. We are also interested to adapt the method presented here to differential games.

\appendix
\section{Derivation of the Hamiltonian} \label{nHJBDeriv} In this appendix, we derive the Hamilton-Jacobi-Equation for our control problem. Beginning from \eqref{eq:Hamiltonian1}, we see that for fixed $p$ and $\g = \nabla M$, this boils down to computing \begin{equation} \label{eq:ndimminprob}\inf_{a \in \mathbb S^{n-1}} f(a) \,\,\,\, \text{ where} \,\,\, f(a) = \frac{\innerprod p a}{\sqrt{1+\innerprod {\g}{a}^2}}.\end{equation} Since we are minimizing a continuous function on a compact set, we know a minimizer exists. If $p,\g$ are linearly dependent, the proof is significantly simplified be writing one as a constant multiple of the other; we omit this case and assume that $p, \g$ are linearly independent. By the method of Lagrange multipliers, the minimizer must occur at a unit vector $a$ such that $\nabla f(a) = 2\lambda a$ for some $\lambda$. Writing this equation out yields \begin{equation} \label{eq:lagMult1} \frac{(1+\innerprod{\g}{a}^2)p - \innerprod{p}a\innerprod\g a \g }{(1+\innerprod{\g}{a})^{3/2}} = 2\lambda a.\end{equation} Taking the inner product with $a$ and using $\abs a^2 = 1$ gives $$2\lambda = \frac{\innerprod{p}{a}}{(1+\innerprod{\g}{a}^2)^{3/2}}.$$ Inserting this into \eqref{eq:lagMult1} and clearing the denominator shows that \begin{equation}\label{eq:lagMult2} \innerprod{p}a a = (1+\innerprod{\g}a^2)p - \innerprod{p}a\innerprod{\g}a \g. \end{equation} Equation \eqref{eq:lagMult2} shows that $a$ lies in the span of $p$ and $\g$, whereupon we can write $a = k_1 p + k_2 \g$. Inserting this representation of $a$ into \eqref{eq:lagMult2}, we can solve for $k_1,k_2$ in terms of $p$ and $\g$.  We see $$
k_1 \innerprod p a p + k_2 \innerprod p a \g = (1+\innerprod{\g}a^2)p - \innerprod{p}a\innerprod{\g}a \g. $$ By linear independence, the coefficients must be equal, so \begin{equation} \label{eq:k2} k_2 = - \innerprod \gamma a = -\innerprod p \g k_1 - \abs{\g}^2k_2  \,\,\, \implies \,\,\, k_2 = -\frac{\innerprod p \g}{1+\abs{\g}^2}k_1.\end{equation} Then $$k_1 = \frac{1+k_2^2}{\innerprod p a} = \frac{1+k_2^2}{\abs{p}^2k_1 + \innerprod p \g k_2} = \frac{1 + \frac{\innerprod p \g ^2}{(1+\abs{\g}^2)^2}k_1^2}{\left(\abs{p}^2 - \frac{\innerprod p \g^2}{1+\abs{\g}^2}\right)k_1}.$$ Solving yields \begin{equation} \label{eq:k1}k_1^2 = \frac{\left(1+\abs{\g}^2\right)^2}{\abs{p}^2\left(1+\abs \g^2\right)^2 - \innerprod p \g^2\left(2+\abs\g^2\right)}.\end{equation} Along with \eqref{eq:k2}, this shows that the minimizer is one of the vectors \begin{equation}\label{eq:k1k2}
a^* = \pm \frac{\left(1+\abs{\g}^2\right)p - \innerprod p \g \g}{\sqrt{\abs{p}^2\left(1+\abs \g^2\right)^2 - \innerprod p \g^2\left(2+\abs\g^2\right)}}.
\end{equation} \emph{Post hoc}, it is obvious that the minus sign leads to a minimum and the plus leads to a maximum, so we take the minus sign. Denoting the denominator in \eqref{eq:k1k2} by $\delta$ momentarily and inserting \eqref{eq:k1k2} into \eqref{eq:ndimminprob}, we see that \begin{align*}
    f(a^*) &= - \frac{(1+\abs \g^2) \abs p^2 - \innerprod{p}\g^2 }{\delta\sqrt{1 + \frac 1 {\delta^2}\left((1+\abs \g^2)\innerprod p \g - \innerprod p \g \abs \g^2\right)^2}} \\ 
    &= - \frac{(1+\abs \g^2) \abs p^2 - \innerprod{p}\g^2 }{\sqrt{\delta^2  +\innerprod p \g^2}} \\
    &= -\frac{(1+\abs \g^2) \abs p^2 - \innerprod{p}\g^2 }{\sqrt{\abs{p}^2\left(1+\abs \g^2\right)^2 - \innerprod p \g^2\left(2+\abs\g^2\right) + \innerprod p \g^2}} \\ 
    &= -\frac{(1+\abs \g^2) \abs p^2 - \innerprod{p}\g^2 }{\sqrt{\abs{p}^2\left(1+\abs \g^2\right)^2 - \innerprod p \g^2\left(1+\abs\g^2\right)}}\\
    &= -\sqrt{\frac{(1+\abs \g^2) \abs p^2 - \innerprod{p}\g^2 }{1 + \abs{\g}^2}} \\
    & = -\sqrt{\abs{p}^2 - \frac{\innerprod{p}\g^2}{1+\abs{\g}^2}} = -\sqrt{p^T \left(I - \frac{\g\g^T}{1+\abs{\g}^2}\right)p}.
\end{align*} Replacing $\g = \nabla M$ and inserting this result into \eqref{eq:Hamiltonian1}, we arrive at \eqref{eq:Hamiltonian} as desired.

\bibliographystyle{plain}
\bibliography{references}
\end{document}